%
%
%
%
%

\RequirePackage{fix-cm}

\documentclass[smallcondensed]{svjour3}     
\smartqed  

\usepackage{graphicx}

\usepackage{amsmath,amssymb,amsbsy}

 \newcommand{\dis}{\displaystyle}

\newtheorem{thm}{\textsc{Theorem}}
\newtheorem{pro}{\textsc{Proposition}}
\newtheorem{cor}{\textsc{Corollary}}
\newtheorem{lem}{\textsc{Lemma}}
\newtheorem{rmq}{\textsc{Remark}}

\newtheorem{exemple}{\textsc{Example}}


\begin{document}

\title{{\L}S condition for filled Julia sets in $\mathbb{C}$}

\author{Fr\'ed\'eric PROTIN
}

\institute{INSA de Toulouse, Institut Math\'ematique de Toulouse \at
              135 avenue de Rangueil, 31400 Toulouse, France \\
              Tel.: +33-684729787\\
              \email{fredprotin@yahoo.fr}      }


\maketitle

\begin{abstract}
In this article, we derive an inequality of {\L}ojasiewicz-Siciak type for certain sets arising in the context of the complex dynamics in dimension 1. More precisely, if we denote by $dist$ the euclidian distance in $\mathbb{C}$, we show that the Green function $G_K$ of the filled Julia set $K$ of a polynomial such that $\mathring{K}\neq \emptyset$ satisfies the so-called {\L}S condition $\displaystyle G_A\geq c\cdot dist(\cdot, K)^{c'}$ in a neighborhood of $K$, for some constants $c,c'>0$. Relatively few examples of compact sets satisfying the {\L}S condition are known. Our result highlights an interesting class of compact sets fulfilling this condition. The fact that filled Julia sets satisfy the {\L}S condition may seem surprising, since they are in general very irregular. In order to prove our main result, we define and study the set of obstruction points to the {\L}S condition. We also prove, in dimension $n\geq 1$, that for a polynomially convex and L-regular compact set of non empty interior, these obstruction points are rare, in a sense which will be specified.
\keywords{{\L}S condition \and Green function \and pluricomplex Green function  \and complex dynamics \and filled Julia set \and potential theory}
\subclass{ 37F50 \and 37F10 \and 31C99 \and 32U35}            
\end{abstract}

\section{Introduction}
\label{intro}

We call {\it{pluricomplex Green function}} $G_A$ of a compact set $A\subset\mathbb{C}^n$, $n\geq 1$, the plurisubharmonic function defined as$$\displaystyle G_{A}:= {\sup}^* \bigg\{ v \in PSH(\mathbb{C}^n):\text{ } v|_{A}\leq 0,\text{ } v(z)\leq \frac{1}{2}\log(1+\|z\|^2)+O(1)\bigg\},$$where ${\sup}^*$ denotes the upper semi-continuous regularization of the upper envelope, and $PSH(\mathbb{C}^n)$ denotes the set of plurisubharmonic functions in $\mathbb{C}^n$. The set $A$ is called {\it{L-regular}} if $G_A$ is continuous. In this case, the set $\{G_A = 0\}$ is the polynomially convex envelope $\hat{A}$ of $A$. We also consider, for an open bounded set $U\subset \mathbb{C}^n$, the {\it{Green function of $A\subset U$ relative to $U$}} defined by $$\displaystyle G_{A,U}:= {\sup}^* \big\{ v \in PSH(U):\text{ } v\leq 0,\text{ } v|_{A}\leq -1\big\}.$$

Let $U_a:=\{G_A<a\}$ for $a\in\mathbb{R}^{+}\setminus \{0\}$. If $A$ is not pluripolar and $\hat{A}\subset U_a$, then a relation between $G_A$ and $G_{A,U_a}$ holding in $U_a$ is given by Proposition 5.3.3 in \cite{K}: \begin{equation}\label{relation}
\dis G_A=a(G_{A,U_a}+1).
\end{equation}

A compact $A\subset \mathbb{C}^n$ is said to {\it{satisfy the {\L}S} condition} if there exists an open set $U$ containing it and two constants $c,c'>0$ such that its pluricomplex Green function $G_A$ verifies the following regularity condition : $$\forall z\in U, G_A(z)\geq c\cdot dist(z, A)^{c'},$$where $dist$ denotes the euclidean distance (see for instance \cite{BG} or \cite{BG2}).\\
\noindent For technical reasons, we will take in this work $c' = \frac{1}{c}$, which does not change the definition, as we are not interested here in finding the optimal constants.\\
 
On a compact set $A\subset \mathbb{C}^n$ verifying the {\L}S condition, as well as the HCP condition (i.e. the H\"olderian continuity of $G_A$), for example a semi-algebraic compact set, we have the rapid approximation property of continuous functions by polynomials. Relatively few examples of compacts satisfying the {\L}S condition are known. Some examples are given in \cite{PiI}. Let us also note that Pierzcha{\l}a showed in \cite{PiII} that a compact verifying the {\L}S condition is polynomially convex. Bia{\l}as and Kosek \cite{BK} construct such sets using holomorphic dynamics.

In the same vein, we show that the so-called filled Julia sets in $\mathbb{C}$ satisfy the {\L}S condition. More precisely, our main goal is to show the following result concerning the filled Julia set of a polynomial $f:\mathbb{C}\rightarrow \mathbb{C}$, i.e. the set of points $z\in\mathbb{C}$ whose orbit $(f^n(z))_n$ is bounded : \\
 
\noindent {\bf{Theorem. }}{\it{The filled Julia set of a polynomial $f:\mathbb{C}\rightarrow \mathbb{C}$ of degree $\geq 2$, if its interior is non empty, satisfies the {\L}S condition.}}\\

The differentials operators operators $\partial$ and $\overline{\partial}$ will be understood in the sense of currents. Recall that a continuous function $u$ from an open set of $\mathbb{C}^n$ into $\mathbb{R}$ is pluriharmonic (harmonic if $n=1$) if and only if $\partial \overline{\partial} u =0$ (see for example Theorem 2.28 in \cite{L}). \\

In Section 2, we recall some definitions and elementary facts about holomorphic dynamics in one dimension, and we prove a useful lemma concerning the regularity of filled Julia sets. More precisely, we prove that the filled Julia set $K$ of a polynomial of degree $d\geq 2$ with non-empty interior satisfies $\overline{\mathring{K}}=K$. In Section 3, we define in $\mathbb{C}^n$, $n\geq 1$, the set of obstruction points to the {\L}S condition, and we prove that the complementary of this set is big, in a sense which will be specified. Section 4 is devoted to the proof of the main theorem previously stated.

\section{Dynamics in $\mathbb{C}$}

We start by recalling some definitions related to one-dimensional holomorphic dynamics. Let us consider a polynomial $f:\mathbb{C}\rightarrow \mathbb{C}$ of degree $d\geq 2$. 

We call \textit{Fatou set} of $f$, denoted $\mathcal{F}$, the largest open subset in which the family of iterations $f^n$ is equicontinuous. 

\textit{The Julia set of} $f$, denoted $J$, is the complement of $\mathcal{F}$ in $\mathbb{C}$. Let us note for what follows that $J$ is not pluripolar. 

We call \textit{filled Julia set of} $f$ the set $K$ of points $z\in \mathbb{C}$ whose orbit $(f^n(z))_n$ is bounded.
Note that $K$ is compact, as $\infty$ is a superattractive fixed point of $f$, hence belonging to $\mathcal{F}$. The complement of $K$ is the basin of attraction of infinity. We have $\partial K = J$ and $G_K = G_J$. \\
Under very little restrictive conditions, the set $K$ is of non-empty interior. It is the case, for instance, for $f(z) = z^2+a$, with $a$ in the interior of the Mandelbrot set.\\

We construct the subharmonic function $G:\mathbb{C}\rightarrow\mathbb{R}^+$, limit in $L^1_{loc}$ of the sequence $ (\log(1+|f^n|)/d^n)_n$. It is known that $G$ is continuous (and even H\"olderian), harmonic in $\mathcal{F}$, and that it verifies $G(z)=0$ if and only if $z\in K$, and also that $G(z)-\log|z| = O(1)$ at infinity. By uniqueness, $G$ is therefore the pluricomplex Green function of $K$ (and of $J$). It satisfies by construction the invariance property \begin{equation}\label{invariance}G\circ f = d\cdot G.\end{equation}The measure $\frac{i}{\pi}\partial \overline{\partial}  G$ is a probability measure of support exactly $J$ (see e.g. \cite{G}). We first show a preliminary lemma about the filled Julia set.

\begin{lem}\label{julia}
The filled Julia set $K$ of a polynomial of degree $d\geq 2$ with non-empty interior satisfies $\overline{\mathring{K}}=K$.
\end{lem}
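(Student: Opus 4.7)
The plan is to prove only the nontrivial inclusion $K \subseteq \overline{\mathring{K}}$, since $\overline{\mathring{K}} \subseteq \overline{K} = K$ is automatic. Using $\partial K = J$, one has the decomposition $K = \mathring{K} \cup J$. Points of $\mathring{K}$ already lie in $\overline{\mathring{K}}$, so the whole question reduces to showing $J \subseteq \overline{\mathring{K}}$.

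The first step I would carry out is to observe that $\mathring{K}$ is backward invariant under $f$. The polynomial $f$ is continuous and $K$ is completely invariant, i.e.\ $f^{-1}(K) = K$, so $f^{-1}(\mathring{K})$ is an open subset of $K$, hence contained in $\mathring{K}$. By induction, $f^{-n}(\mathring{K}) \subseteq \mathring{K}$ for every $n \geq 0$.

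The second step is to invoke the classical density statement from one-dimensional holomorphic dynamics: for any non-exceptional point $z_0 \in \mathbb{C}$, the set of iterated preimages $\bigcup_{n \geq 0} f^{-n}(z_0)$ is dense in the Julia set $J$. Since a polynomial of degree $d \geq 2$ has at most two exceptional points while $\mathring{K}$ is open and non-empty by hypothesis, I can pick a non-exceptional $z_0 \in \mathring{K}$. Combining this choice with the previous step gives $\bigcup_n f^{-n}(z_0) \subseteq \mathring{K}$, and therefore
\[
J \subseteq \overline{\bigcup_{n \geq 0} f^{-n}(z_0)} \subseteq \overline{\mathring{K}},
\]
which together with $\mathring{K} \subseteq \overline{\mathring{K}}$ yields $K \subseteq \overline{\mathring{K}}$.

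I do not anticipate any serious obstacle here: the argument is entirely structural. The only details that demand a short justification are that mere continuity of $f$ already guarantees that $f^{-1}(\mathring{K})$ is open (so no appeal to the open mapping property of holomorphic maps is needed), and that a non-exceptional basepoint can be found inside $\mathring{K}$, which follows from the finiteness of the exceptional set together with the hypothesis $\mathring{K} \neq \emptyset$.
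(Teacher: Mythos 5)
Your proof is correct, and its skeleton is the same as the paper's: pick a non-exceptional basepoint in $\mathring{K}$, observe that $\mathring{K}$ is backward invariant, and conclude that the preimages of that point accumulate on every point of $J=\partial K$. The difference lies in the key lemma driving the accumulation step. You invoke the classical Fatou--Julia theorem that the inverse orbit $\bigcup_{n\geq 0} f^{-n}(z_0)$ of any non-exceptional point is dense in $J$, a purely topological statement provable with Montel's theorem; the paper instead appeals to the Brolin-type equidistribution theorem, i.e.\ the weak convergence of $\frac{1}{d^n}f^{n*}\delta_x$ to the equilibrium measure $\frac{i}{\pi}\partial\overline{\partial}G$, whose support is exactly $J$, and reads off density of preimages from the support statement. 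Your input is strictly weaker and more elementary, which makes the lemma self-contained within classical one-dimensional dynamics; the paper's stronger input is natural in its potential-theoretic framework, where the measure $\frac{i}{\pi}\partial\overline{\partial}G$ already appears elsewhere, but it is not needed for this lemma. Two small remarks: your explicit justification that $f^{-1}(\mathring{K})$ is open and contained in $K$, hence in $\mathring{K}$, fills in a step the paper merely asserts; and for a polynomial the finite exceptional set has in fact at most one point (your bound of two, valid for rational maps on the sphere, is harmless since only finiteness is used).
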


\begin{proof}
First recall the following equidistribution result (see e.g. Theorem 1.10 in \cite{G} or Theorem 6.1 in \cite{FS2}). Let $G$ the Green function associated to a polynomial $f$ of degree $d\geq 2$. Then, for all $x\in \mathbb{C}$ (except possibly for a totally invariant set consisting of at most two points in the Fatou set, see e.g. \cite{Be}), we have the following weak convergence of measures :\begin{equation}\label{equidistribution}\displaystyle\lim_{n\rightarrow +\infty}\frac{1}{d^n}f^{n*}\delta_x:=\lim_{n\rightarrow +\infty}\frac{1}{d^n}\sum_{y:f^n(y)=x}\delta_y = \frac{i}{\pi}\partial \overline{\partial} G,\end{equation}

\noindent where $\delta_x$ is a Dirac measure with support in $\mathring{K}$, except for the exceptional points previously mentioned. Note that $\partial K$ is invariant by $f$ and by $f^{-1}$, and hence so is $\mathring{K}$. By Equation (\ref{equidistribution}), since the support of $f^{n*}\delta_x$ is included in $\mathring{K}$ for all $n$, every open subset of $\mathbb{C}$ intersecting $J=\partial K$ also intersects $\mathring{K}$, thus $\overline{\mathring{K}}=K$. 
\qed
\end{proof}

\section{Study of the obstruction to the {\L}S condition}

For $n\geq 1$, let \begin{equation}\label{Oc}\displaystyle O_c:=\{z\in\mathbb{C}^n: dist(z,A)< 1, G_A(z)< c \cdot dist(z,A)^{1/c}\}.\end{equation}
Note that the sequence of open sets $O_c$ is increasing with $c$ for $c<1$. The {\L}S condition is satisfied by a compact nonpolar set $A\subset\mathbb{C}^n$, L-regular and polynomially convex, if and only if the set $$\displaystyle I:=\bigcap_{c>0} \overline{O_c}\subset \partial A$$ is empty. We call $I$ {\it{the set of obstruction points to the {\L}S  condition}}.

\begin{exemple}[\cite{BK}] Counter-example : If $A$ is the union of two disks of radius $1$, tangent to each other at the origin, then it does not satisfy the {\L}S condition; the set of obstruction points is  $I=\{0\}\neq \emptyset$. 
\end{exemple}

\noindent The following result provides more insight into the structure of the complementary of $O_c$. We prove it for $n\geq 1$.

\begin{pro}\label{lemme1}
Let $A\subset\mathbb{C}^n$, $n\geq 1$, be a nonpolar, L-regular and polynomially convex compact set. Suppose that the pluricomplex Green function $G_A$ is pluriharmonic outside of A (harmonic if $n=1$).\\
 Then, there exists $c_0>0$ such that $\forall c\in \ ]0,c_0]$, $\partial A$ is included in the boundary of the open set $\{z \in \mathbb{C}^n: G_A(z)>c \cdot dist(z,A)^{1/c}\}$. 
\end{pro}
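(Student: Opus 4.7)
The plan is as follows. Since $A$ is L-regular and polynomially convex, every $z_0 \in \partial A$ satisfies $G_A(z_0) = 0 = dist(z_0, A)$, so $z_0$ does not belong to $V_c := \{G_A > c \cdot dist(\cdot, A)^{1/c}\}$. The inclusion $\partial A \subset \partial V_c$ thus reduces to $\partial A \subset \overline{V_c}$, which amounts to producing, for every $z_0 \in \partial A$ and every $\epsilon > 0$, a point $z \in B(z_0, \epsilon)$ with $G_A(z) > c \cdot dist(z, A)^{1/c}$.

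I would look for such $z$ along a ray $w_t := z_0 + tv$, where $v$ is a unit vector chosen so that $z_0 + tv \notin A$ for small $t > 0$ (possible because $z_0 \in \partial A$). Then $G_A(w_t) > 0$ and $dist(w_t, A) \leq t$, so it is enough to arrange $G_A(w_t) > c \cdot t^{1/c}$. Observing that $c \mapsto c\,t^{1/c}$ is increasing on $(0, 1]$ when $t \in (0, 1)$ (direct differentiation, since $\frac{d}{dc}\log(c t^{1/c}) = \frac{1}{c} - \frac{\log t}{c^2} > 0$ for $t < 1$), the problem reduces to finding a uniform $c_0 > 0$ such that, for every $z_0 \in \partial A$, there is a direction $v$ along which $G_A(w_t) > c_0\, t^{1/c_0}$ for all sufficiently small $t$.

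This uniform control on the decay of $G_A$ is the technical heart of the proof. I would try to establish it by combining (i) the asymptotic $G_A(z) = \log\|z\| + O(1)$ at infinity, which yields a uniform positive lower bound on $\sup_{B(z_0, R_0)} G_A$ for $R_0$ of order $\mathrm{diam}(A)$ (since $|z_0| \leq \mathrm{diam}(A)$), with (ii) Hadamard's three-circle theorem applied to $G_A$ (directly for $n = 1$; for $n \geq 2$, restricted to a complex line through $z_0$ along which $G_A$ is subharmonic, thanks to the assumption that $G_A$ is pluriharmonic outside $A$), propagating the medium-radius bound down to smaller radii. Compactness of $\partial A$ and continuity of $G_A$ should then provide the uniformity in $z_0$.

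The main obstacle is that a direct three-circle extrapolation for the merely subharmonic function $G_A$ yields only the log-linear estimate $M(z_0, \epsilon) \geq M(z_0, \rho) - K \log(\rho/\epsilon)$, which becomes vacuous as $\epsilon \to 0$. To extract a genuine polynomial decay rate, one likely applies three-circles to the extremal polynomials $P_n$ entering the Bernstein--Walsh identity $G_A = \lim_n \frac{1}{n}\log|P_n|$: at each fixed $n$, Hadamard for the log-subharmonic $\log|P_n|$ gives the stronger inequality $M_{|P_n|}(z_0, \epsilon) \geq (\epsilon/\rho)^{d_n}\, M_{|P_n|}(z_0, \rho)$, and carefully passing to the limit via the locally uniform convergence $\frac{1}{n}\log|P_n| \to G_A$ guaranteed by L-regularity should transfer this into a usable lower bound for $G_A$ with constants depending only on $A$, allowing one to choose $c_0$ uniformly.
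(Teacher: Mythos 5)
There is a genuine gap: the estimate your whole argument rests on --- a uniform polynomial lower bound $\sup_{B(z_0,\epsilon)}G_A\geq C\epsilon^{\alpha}$ for $z_0\in\partial A$, to be compared with $c\,\epsilon^{1/c}$ --- is simply false under the hypotheses of the Proposition. Take the paper's own Example (two closed unit disks tangent at $0$, in $\mathbb{C}$): this set is nonpolar, L-regular, polynomially convex, and $G_A$ is automatically harmonic off $A$ when $n=1$, so it is admissible. Yet every point of $B(0,\epsilon)\setminus A$ lies in the cusp between the two circles, where $G_A$ decays like $\exp(-C/\epsilon)$; hence $\sup_{B(0,\epsilon)}G_A$ is exponentially small in $1/\epsilon$, not polynomially bounded below. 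For the same reason your reduction step already fails there: the only ray from $z_0=0$ that stays outside $A$ is the imaginary axis, and along it $G_A(w_t)\lesssim e^{-C/t}$ can never exceed $c\,t^{1/c}$. The conclusion of the Proposition is nevertheless true for this set, because the witnesses in $\{G_A>c\,dist(\cdot,A)^{1/c}\}$ near $0$ are not points where $G_A$ is comparably large; they are points extremely close to the smooth part of $\partial A$ (at height $y$, at distance $\delta\ll y^2$ from a circle), where $G_A\approx \delta\,y^{-2}e^{-C/y}$ still beats $c\,\delta^{1/c}$ because $1/c>1$. Your substitution $dist(w_t,A)\leq t$ discards exactly this mechanism. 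Separately, your proposed fix via the extremal polynomials does not yield a polynomial rate anyway: the maximum-modulus bound $M_{|P_n|}(\epsilon)\geq(\epsilon/\rho)^{d_n}M_{|P_n|}(\rho)$ must be divided by $d_n$ in the logarithm to say anything about $G_A$, and this returns precisely the log-linear estimate $\sup_{B(z_0,\epsilon)}G_A\geq\sup_{B(z_0,\rho)}G_A-\log(\rho/\epsilon)$ that you yourself identified as vacuous --- consistently with the fact that no better bound can exist, by the example above.

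For comparison, the paper argues quite differently, by contradiction and by measure theory rather than by exhibiting explicit points: if $G_A\leq c'\,dist(\cdot,A)^{1/c'}$ held on a whole ball $B(x',r')$ around a boundary point, the Chern--Levine--Nirenberg inequality would give $\mu\big(B(x,r)\big)\lesssim c'\,r^{\frac{1}{c'}-2n}$ for the measure $\mu=\frac{i}{\pi}\partial\overline{\partial}G_A\wedge\omega^{n-1}$, whose support is exactly $\partial A$ (here the pluriharmonicity hypothesis and polynomial convexity are used); Frostman's lemma would then force the Hausdorff dimension of $\partial A\cap B(x',r_0)$ to exceed $2n$ once $c'<\frac{1}{4n}$, which is impossible in $\mathbb{C}^n$. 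If you want to salvage a constructive approach, you would have to locate witnesses whose distance to $A$ is much smaller than their distance to the given boundary point, rather than control $\sup_{B(z_0,\epsilon)}G_A$; the measure-theoretic route avoids having to construct them at all.
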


\begin{proof}
Let $\mu$ denote the positive measure $\dis \frac{i}{\pi}\partial \overline{\partial} G_A\wedge \omega^{n-1}$ on $\mathbb{C}^n$, where $$\dis \omega := \frac{i}{2\pi}\partial \overline{\partial} \log (1+\|z\|^2)$$ is the Fubini-Study form. Note that the support of the measure $\mu$ is exactly $\partial A$. Indeed, $ supp(\mu)\subset\partial A$ since $\frac{i}{\pi}\partial \overline{\partial} G_A=0$ in $\mathbb{C}^n\setminus \partial A$ by hypothesis. On the other hand, if there existed $x\in \partial A\setminus supp(\mu)$, then $G_A$ would be (pluri)harmonic in a neighborhood of $x$, hence null in this neighborhood, which can not happen because $A$ is polynomially convex.

\noindent Let us suppose by contradiction that $\forall c_0>0,\text{ }\exists c\in ]0,c_0],\text{ }\exists x \in \partial A,\text{ }\exists r>0,\text{ }$
$B(x,r)\cap \{z \in \mathbb{C}^n, G_A(z)>c \cdot dist(z,A)^{1/c}\}=\emptyset.$

\noindent Thus we can take $\dis c'\in \left]0,\frac{1}{4n}\right[$, $x'\in\partial A$, and $r'>0$, such that $$G_A(z)\leq c'\cdot dist(z,A)^{\frac{1}{c'}}, \ \forall  z \in B(x',r').$$

\noindent Denote $r_0:=\frac{r'}{2}$. Then, $\forall r<r_0$, $\forall x\in B(x',r_0)\cap \partial A$, the Chern-Levine-Nirenberg inequality implies : $$\displaystyle \mu\big(B(x,r)\big) \leq k\cdot r^{-2n}\sup_{B(x,2r)} G_A \leq c'\cdot k\cdot (2r)^{\frac{1}{c'}-2n},$$
for some constant $k>0$ independent of $r$, $r_0$, $x'$ and $c'$.

With the notation $\displaystyle \nu:=\frac{\mu}{\mu\big(B(x',r_0)\big)}\mathbf{1}_{B(x',r_0)}$, where $\displaystyle\mathbf{1}_{B(x',r_0)}$ is the characteristic function of $\displaystyle B(x',r_0)$, the measure $\nu$ is a probability measure, and we can rewrite the previous inequality : $\forall r>0$, $\forall x\in B(x',r_0)\cap \partial A$, $$\displaystyle \nu\big(B(x,r)\big)\leq \frac{ c'\cdot k}{{\mu\big(B(x',r_0)\big)}}\cdot (2r)^{\frac{1}{c'}-2n}.$$

Then, by Frostman Lemma (see for example Lemma 10.2.1 in \cite{Be}), the Hausdorff dimension of $\partial A\cap B(x_0,r_0)$ is strictly greater than $2n$ for our choice $c'<\frac{1}{4n}$, which gives a contradiction. (Recall that Frostman Lemma ensures that, if $m$ is a probability measure on a metric space $E$ verifying $m\big(B(x,r)\big)<q \cdot r^{\alpha}$ for all $x\in E$, $r>0$, with fixed $q>0$, $\alpha >0$, then the Hausdorff dimension of $E$ is greater than $\alpha$).

We thus conclude that $\exists c_0>0$, $\forall c\in ]0,c_0]$, $\forall x\in\partial A$, $\forall r>0$: $$\displaystyle \dis B(x,r)\cap \{z \in \mathbb{C}^n, G_A(z)>c \cdot dist(z,A)^{1/c}\}\neq \emptyset,$$
which proves the statement. 
\qed
\end{proof}

\section{Proof of the main theorem}

We will need the following result of Poletsky (Corollary p. 170 in \cite{Po}, see also \cite{Po2}), generalized by Rosay (\cite{Ro}). Let $U$ a connected complex manifold of dimension $n\geq 1$. We denote by $\mathcal{H}_{z,U}$ the set of holomorphic functions $h:V_h\rightarrow U$ from a neighbourhood $V_h$ of $\overline{\Delta}=\{|z|\leq 1\}\subset\mathbb{C}$ (possibly depending on $h$) into $U$ such that $h(0)=z$. We also denote by $PSH(U)$ the set of plurisubharmonic functions defined on $U$. 

\begin{pro}\label{poletsky} Let $u:U\rightarrow \mathbb{R}$ be an upper semi-continuous function. With the previous notations, the function defined by $$\displaystyle \tilde{u}(z):=\frac{1}{2\pi}\inf_{f\in \mathcal{H}_{z,U}}\int_0^{2\pi} u(f(e^{i\theta}))d\theta ,$$ if it is not everywhere equal to  $-\infty$, belongs to $PSH(U)$ and verifies $\tilde{u}\leq {u}$. Moreover, this function $\tilde{u}$ is maximal among all the functions in $PSH(U)$ verifying this inequality.
\end{pro}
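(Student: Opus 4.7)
The plan is to verify three properties of $\tilde{u}$ in turn: the pointwise inequality $\tilde{u}\leq u$, maximality of $\tilde{u}$ among plurisubharmonic minorants of $u$, and plurisubharmonicity of $\tilde{u}$ itself. The first two are essentially formal; the real difficulty lies in the third.

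For $\tilde{u}\leq u$, evaluate the infimum at the constant disk $f(\zeta)\equiv z$, which lies in $\mathcal{H}_{z,U}$ and yields $\frac{1}{2\pi}\int_0^{2\pi} u(f(e^{i\theta}))\,d\theta = u(z)$. For maximality, suppose $v\in PSH(U)$ satisfies $v\leq u$. Then for any $f\in \mathcal{H}_{z,U}$, the composition $v\circ f$ is subharmonic in a neighbourhood of $\overline{\Delta}$, so the sub-mean-value inequality for subharmonic functions gives
\begin{equation*}
v(z)=v(f(0))\leq \frac{1}{2\pi}\int_0^{2\pi} v(f(e^{i\theta}))\,d\theta \leq \frac{1}{2\pi}\int_0^{2\pi} u(f(e^{i\theta}))\,d\theta.
\end{equation*}
Taking the infimum over $f\in\mathcal{H}_{z,U}$ yields $v\leq \tilde{u}$, as claimed.

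The heart of the argument is the plurisubharmonicity of $\tilde{u}$: for every holomorphic disk $g:V\to U$ defined on a neighbourhood of $\overline{\Delta}$, one must establish the sub-mean-value inequality
\begin{equation*}
\tilde{u}(g(0))\leq \frac{1}{2\pi}\int_0^{2\pi}\tilde{u}(g(e^{i\theta}))\,d\theta,
\end{equation*}
together with the requisite upper semicontinuity. Poletsky's strategy is as follows: fix $\varepsilon>0$, and for each $\theta\in[0,2\pi)$ select a near-optimal disk $f_\theta\in \mathcal{H}_{g(e^{i\theta}),U}$ with $\frac{1}{2\pi}\int u\circ f_\theta \leq \tilde{u}(g(e^{i\theta}))+\varepsilon$; then \emph{glue} the disk $g$ with the family $\{f_\theta\}_{\theta}$ into a single disk $F\in \mathcal{H}_{g(0),U}$ for which $\frac{1}{2\pi}\int u\circ F$ is bounded by the right-hand side up to $O(\varepsilon)$.

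The gluing step is the main obstacle. One cannot simply reparametrise, since no single holomorphic map produces both the base disk $g$ and the boundary disks $f_\theta$ simultaneously; instead one constructs chains of analytic disks glued along boundary arcs, approximates the continuous $\theta$-family by finitely many disks, and controls the resulting boundary integrals via Harnack-type estimates and measure-theoretic selection. In Poletsky's original framework (complex Euclidean space) this depends on Runge-type approximation for analytic multi-valued structures; Rosay's generalisation replaces the ambient $\mathbb{C}^n$ by an arbitrary connected complex manifold, requiring a delicate local-to-global patching. Once this is done, a standard argument with $\tilde{u}^*$ together with the maximality already established above forces $\tilde{u}=\tilde{u}^*\in PSH(U)$.
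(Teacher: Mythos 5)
Your treatment of the two formal parts is correct: the constant disk $f\equiv z$ gives $\tilde u\le u$, and composing any plurisubharmonic $v\le u$ with $f\in\mathcal H_{z,U}$ (so that $v\circ f$ is subharmonic on a neighbourhood of $\overline\Delta$) and using the sub-mean-value inequality on the unit circle gives $v\le\tilde u$; together with $\tilde u\le u$ this yields the maximality claim \emph{once} $\tilde u$ is known to be plurisubharmonic. The gap is exactly there: the plurisubharmonicity of $\tilde u$ is described, not proved. The paragraph on gluing the base disk $g$ to near-optimal disks $f_\theta$, passing to a finite subfamily, and invoking Runge-type approximation and Rosay's local-to-global patching is a summary of the strategy of \cite{Po}, \cite{Po2} and \cite{Ro}, but none of its steps is carried out: the measurable (or piecewise-constant) selection of the $f_\theta$, the actual construction of a single disk $F\in\mathcal H_{g(0),U}$ whose boundary integral is controlled by $\frac{1}{2\pi}\int_0^{2\pi}\tilde u(g(e^{i\theta}))\,d\theta+O(\varepsilon)$, and the upper semicontinuity issues for $\tilde u$ are precisely the content of Poletsky's theorem, and they occupy the original papers. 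The closing sentence about $\tilde u^*$ also leans on this: to conclude $\tilde u=\tilde u^*$ you need $\tilde u^*$ to be plurisubharmonic and $\le u$, which again rests on the unproved sub-mean-value estimate along analytic disks.

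For comparison, the paper does not prove Proposition \ref{poletsky} at all: it is quoted as a known result of Poletsky (Corollary p.~170 in \cite{Po}, see also \cite{Po2}), in the generality given by Rosay \cite{Ro}, and is then used as a black box to obtain Corollary \ref{klimek} and Theorem \ref{thm}. So the two defensible options are to cite it, as the paper does, or to reproduce the disk-gluing construction in full. Your proposal sits in between: the parts you actually prove are the routine ones, and the crux is deferred, in outline form, to exactly the references the paper cites; as a self-contained proof it is therefore incomplete.
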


\begin{rmq}We deduce from Proposition \ref{poletsky} the following property of antisubharmonic functions, i.e. functions with subharmonic opposite. Let $B:=B(a,r)\subset\mathbb{C}$ be an open ball, $\dis u:\overline{B}\rightarrow\mathbb{R}$ a continuous function, antisubharmonic in $B$. Then $\hat{u}:B\rightarrow\mathbb{R}$ is an harmonic function, with the same boundary values as $u$, in the sense that $\displaystyle \lim_{z\rightarrow z_0}\hat{u}=u(z_0)$ for $z_0\in \partial B$. \\
Indeed, given a continuous function $g:\overline{B}\rightarrow \mathbb{R}$, denote by $\tilde{g}:B\rightarrow \mathbb{R}$ the solution of the Dirichlet problem in $B$ with boundary condition $g_{|_{\partial B}}$, that is to say, the unique continuous function defined on $\overline{B}$ which is harmonic in $B$ and equal to $g$ on $\partial{B}$. Then $v:=\max(\tilde{u},\hat{u})$ is a subharmonic function with the same values as $u$ on $\partial B$. Since $u$ is antisubharmonic, we have $\tilde{u}\leq u$. Thus $$\hat{u}\leq v\leq u.$$ Since $\hat{u}$ is maximal among the subharmonic functions which are $\leq u$ in $B$ and equal to $u$ on $\partial B$, we conclude that $\hat{u}=v$, and hence $\tilde{u}=\hat{u}$. \\
Thanks to Theorem 3.1.4 in \cite{K}, the conclusion is the same if $B$ is a ball in $\mathbb{C}^n$, when substituting the expression "harmonic function" by "maximal plurisubharmonic function", and the expression "antisubharmonic function" by "antiplurisubharmonic function".\end{rmq}

Let $U\subset \mathbb{C}^n$, $n\geq 1$, be a bounded open set. Denote by $\lambda$ the normalized Lebesgue measure on the unit circle $\partial\mathbb{U}\subset\mathbb{C}$. Denote also by $\Lambda_{z,U}$ the set of measures of the form $h_*\lambda(\cdot):=\lambda(h^{-1}(\cdot))$, where $h:V_h\rightarrow U$ is an holomorphic function defined in a neigborhood $V_h$ (possibly depending on $h$) of the closed unit disk $\overline{\mathbb{U}}$, such that $h(0)=z$. Note that that the Dirac measure $\delta_z$ belongs to $\Lambda_{z,U}$. An immediate consequence of Proposition \ref{poletsky} is the following corollary, where $\mathbf{1}_G$ denotes the characteristic function of $G\subset\mathbb{C}^n$ :

\begin{cor}\label{klimek}Let $U\subset \mathbb{C}^n$ be a bounded open set, and $A\subset U$ a $L$-regular nonpolar compact set satisfying $\overline{\mathring{A}}=A$. Then $$\dis \frac{1}{2\pi}\inf_{f\in \mathcal{H}_{z,U}}\int_0^{2\pi} -\mathbf{1}_{A}\circ f(e^{i\theta})d\theta=-\sup_{\mu_z\in\Lambda_{z,U}}\mu_z(A)   =G_{A,U}(z).$$\end{cor}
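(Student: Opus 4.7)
The plan is to prove the two equalities in sequence. The first, $\frac{1}{2\pi}\inf_f \int_0^{2\pi} -\mathbf{1}_A \circ f \, d\theta = -\sup_{\mu_z} \mu_z(A)$, reduces to a change-of-variables identity: for each $f \in \mathcal{H}_{z,U}$, setting $\mu_z := f_*\lambda \in \Lambda_{z,U}$, one has $\frac{1}{2\pi}\int_0^{2\pi}\mathbf{1}_A(f(e^{i\theta}))\, d\theta = \mu_z(A)$. Negating and passing to infima/suprema yields the identity.

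For the second equality, $-\sup_{\mu_z}\mu_z(A) = G_{A,U}(z)$, I would prove the two inequalities separately. The inequality $G_{A,U}(z) \leq \frac{1}{2\pi}\inf_f \int_0^{2\pi} -\mathbf{1}_A \circ f \, d\theta$ will follow from the submean-value inequality: any admissible $v \in PSH(U)$ with $v \leq 0$ and $v|_A \leq -1$ satisfies $v \leq -\mathbf{1}_A$ pointwise, so for each analytic disk $f \in \mathcal{H}_{z,U}$, $v(z) \leq \frac{1}{2\pi}\int_0^{2\pi} v(f(e^{i\theta}))\, d\theta \leq \frac{1}{2\pi}\int_0^{2\pi} -\mathbf{1}_A(f(e^{i\theta}))\, d\theta$. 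Taking the infimum over $f$ and then the (USC-regularized) supremum over $v$ gives the bound.

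For the reverse inequality, I will apply Proposition \ref{poletsky} to the function $u := -\mathbf{1}_{\mathring{A}}$, which is upper semi-continuous since $\mathring{A}$ is open. The resulting Poletsky envelope $\tilde{u}(z) = \frac{1}{2\pi}\inf_f \int_0^{2\pi} u(f(e^{i\theta}))\, d\theta$ is the maximal PSH minorant of $-\mathbf{1}_{\mathring{A}}$. The key claim is $\tilde{u} = G_{A,U}$: the inclusion $G_{A,U} \leq \tilde{u}$ is immediate, since $G_{A,U}$ is PSH and $\leq -\mathbf{1}_A \leq -\mathbf{1}_{\mathring{A}}$; the converse uses the hypothesis $\overline{\mathring{A}} = A$ (from Lemma \ref{julia}) to show that any PSH $v \leq -\mathbf{1}_{\mathring{A}}$ is in fact $\leq -\mathbf{1}_A$. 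Once this identification is in place, the monotonicity $-\mathbf{1}_A \leq -\mathbf{1}_{\mathring{A}}$ gives $\frac{1}{2\pi}\inf_f \int_0^{2\pi} -\mathbf{1}_A \circ f\, d\theta \leq \tilde{u}(z) = G_{A,U}(z)$, closing the argument.

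The hard part will be the claim $\tilde{u} \leq -\mathbf{1}_A$, that is, $\tilde{u}(z) \leq -1$ for $z \in \partial A$. Upper semi-continuity of $\tilde{u}$ alone does not propagate the pointwise bound from $\mathring{A}$ to its closure; rather, one must exhibit, for each $z \in \partial A$, analytic disks $f \in \mathcal{H}_{z,U}$ with $\lambda(f^{-1}(\mathring{A}))$ arbitrarily close to $1$. Their existence should follow from a Poisson-integral construction that leverages $\overline{\mathring{A}} = A$ (one can engineer $f = 1 + g$ with $g(0) = 0$ and $\mathrm{Re}(g)$ slightly negative on most of $\partial \mathbb{U}$), and the submean-value inequality $\tilde{u}(z) \leq \frac{1}{2\pi}\int_0^{2\pi}\tilde{u}(f(e^{i\theta}))\, d\theta \leq -\lambda(f^{-1}(\mathring{A}))$ then forces $\tilde{u}(z) \leq -1$ in the limit.
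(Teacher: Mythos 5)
Your overall architecture is the intended one (the paper itself gives no details, calling the corollary an immediate consequence of Proposition~\ref{poletsky}): the first equality is just the change of variables $\mu_z=f_*\lambda$, and the second is meant to come from applying Poletsky's theorem to the upper semi-continuous function $-\mathbf{1}_{\mathring{A}}$, which is exactly why the hypothesis $\overline{\mathring{A}}=A$ (Lemma~\ref{julia}) is there, since $-\mathbf{1}_{A}$ itself is only lower semi-continuous. Your first equality and the inequality $G_{A,U}(z)\leq \frac{1}{2\pi}\inf_f\int_0^{2\pi}-\mathbf{1}_A\circ f\,d\theta$ are fine; for the latter it is cleaner to apply the sub-mean value inequality directly to $G_{A,U}$ itself, which by L-regularity is continuous, $\leq 0$ and equal to $-1$ on $A$, hence is an admissible competitor -- this also avoids the small issue that the regularized supremum of functions bounded by $\phi:=\inf_f(\dots)$ is only bounded by the regularization of $\phi$.

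The genuine gap is the step you yourself single out: $\tilde{u}\leq -1$ on $\partial A$, equivalently the existence, for each $z\in\partial A$, of disks $f\in\mathcal{H}_{z,U}$ with $\lambda\bigl(f^{-1}(\mathring{A})\bigr)$ arbitrarily close to $1$. By Poletsky's theorem this existence is \emph{equivalent} to $u^{*}_{\mathring{A},U}(z)=-1$, i.e.\ to the open set $\mathring{A}$ being non-(pluri)thin at $z$, and this does \emph{not} follow from $z\in\overline{\mathring{A}}$ alone: an open set can be thin at a point of its closure (e.g.\ a union of disks $D(2^{-k},e^{-k^{3}})$ accumulating at $0$; the Wiener series converges). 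Your proposed Poisson-integral construction ``$f=1+g$ with $\mathrm{Re}(g)$ slightly negative on most of $\partial\mathbb{U}$'' is tailored to a smooth model point of a round disk and gives no handle on a general boundary point of a compact that is merely the closure of its interior. So you must either deduce non-thinness of $\mathring{A}$ at every point of $\partial A$ from the stated hypotheses -- L-regularity gives non-thinness of $A$, but a priori that could be carried by $\partial A$ rather than by $\mathring{A}$, so this needs a real argument and it is not even clear it holds in the stated generality -- or restrict to the situation actually used in the paper, where $A=K_{\epsilon}$ is a closed $\epsilon$-neighbourhood: there every boundary point is touched by an open ball of radius $\epsilon$ contained in $\mathring{A}$, a ball is non-thin at its boundary points, and fine continuity of plurisubharmonic functions then gives $v\leq -1$ at that point for every competitor $v$, which closes the argument. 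As written, the crucial step is asserted rather than proved.
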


\noindent Recall that we denote by $K$ the filled Julia set of a polynomial application $f:\mathbb{C}\rightarrow\mathbb{C}$ of degree $\geq 2$, and $dist(\cdot,\cdot)$ the euclidean distance on $\mathbb{C}^n$. Let us prove the main result stated in the introduction :

\begin{thm}\label{thm} Let $K\subset \mathbb{C}$ be the filled Julia set of a polynomial $f:\mathbb{C}\rightarrow \mathbb{C}$ of degree $d\geq 2$, of non-empty interior. Then $K$ satisfies the {\L}S condition.\end{thm}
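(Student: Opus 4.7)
The plan is to show that the obstruction set $I:=\bigcap_{c>0}\overline{O_c}$ (defined in Section~3) for $A=K$ is empty. By Proposition~\ref{lemme1}, applicable since $K$ is polynomially convex, $L$-regular (as $G:=G_K$ is continuous), nonpolar ($\mathring{K}\neq\emptyset$), and $G$ is harmonic on $\mathbb{C}\setminus K\subset\mathcal{F}$, we already know $I\subset\partial K=J$. To reformulate the {\L}S inequality in a form useful for dynamics, fix a reference level $a>0$ and work on $U_a=\{G<a\}$. By the relation~(\ref{relation}) and Corollary~\ref{klimek}---whose hypothesis $\overline{\mathring{K}}=K$ is supplied by Lemma~\ref{julia}---the condition $G(z)\geq c\cdot dist(z,K)^{1/c}$ on $U_a$ is equivalent to
\[
1-\sup_{h\in\mathcal{H}_{z,U_a}}\lambda\bigl(h^{-1}(K)\bigr)\geq\frac{c}{a}\cdot dist(z,K)^{1/c},\quad z\in U_a.
\]

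The essential tool is then the invariance $G\circ f=d\cdot G$, which gives $f^{-1}(K)=K$ and $f(U_a)=U_{da}$. Hence for any $h\in\mathcal{H}_{z,U_a}$ the iterate $g:=f^n\circ h$ lies in $\mathcal{H}_{f^n(z),U_{d^na}}$ and satisfies $g^{-1}(K)=h^{-1}(K)$; combined with the invariance of $G$, this makes the sup above invariant under the substitution $(z,U_a)\mapsto(f^n(z),U_{d^na})$. Given $z\in U_a$ close to $K$, I would choose the smallest $n=n(z)$ such that $f^n(z)$ lies in a fixed annulus $\Gamma:=\{a_0\leq G\leq d\cdot a_0\}$ for a small $a_0>0$; this is possible because $G(f^n(z))=d^nG(z)$ moves by a factor $d$ per step. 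On the compact $\Gamma$, $dist(\cdot,K)\geq \delta_0>0$, so the disk-sup inequality at $f^n(z)$ holds with uniform constants by a direct compactness argument. Transferring this estimate back to $z$ via the invariance-equality of the disk-sup would recover a lower bound at $z$, provided one can compare $dist(z,K)$ and $dist(f^n(z),K)$ uniformly.

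The hard part will be precisely this distance comparison. I expect it to require combining the Chern--Levine--Nirenberg / Frostman dimension machinery used in the proof of Proposition~\ref{lemme1} with the local normal forms of $f$ at its repelling periodic points (which are dense in $J=\partial K$), so as to produce a uniform H\"older-type distortion estimate $dist(z,K)\leq C\cdot dist(f^n(z),K)^{\beta}$ with $\beta>0$ independent of $z\in U_a$. Near critical points of $f^n$ and near parabolic periodic points of $f$---which are allowed under the bare hypothesis $\mathring{K}\neq\emptyset$, e.g.\ for $f(z)=z^2+1/4$---the naive bi-Lipschitz comparison fails, and such orbits must be treated separately, exploiting the density of non-obstruction points furnished by Proposition~\ref{lemme1}. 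This is where I expect the most delicate part of the argument to lie, and where the self-similar structure forced by $G\circ f=d\cdot G$ is indispensable to rule out the type of cuspidal obstruction displayed in the two-disks example.
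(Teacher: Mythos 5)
Your scaffolding is essentially the paper's: Lemma \ref{julia} to legitimize Corollary \ref{klimek}, the relation (\ref{relation}), the invariance (\ref{invariance}), and iteration of $z$ until $f^n(z)$ reaches a fixed corona where $G_K$ and $dist(\cdot,K)$ are bounded below, then pulling the estimate back. But the step you yourself isolate as ``the hard part'' --- a uniform comparison between $dist(z,K)$ and $dist(f^n(z),K)$ --- is left unproven, and this is a genuine gap, not a technicality. Worse, the estimate you propose to prove, $dist(z,K)\le C\, dist(f^n(z),K)^{\beta}$ with $C,\beta$ independent of $z$ and $n$, would not close the argument even if established. Pulling back from the annulus only gives $G_K(z)=d^{-n}G_K(f^n(z))\ge a_0 d^{-n}$, so to reach $G_K(z)\ge c\,dist(z,K)^{1/c}$ you need $dist(z,K)\lesssim d^{-cn}$, i.e.\ an upper bound on $dist(z,K)$ decaying exponentially in the escape time $n$. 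Since $dist(f^n(z),K)$ is merely bounded above on $\overline{U_a}$, your H\"older bound gives an $n$-independent constant, and iterating a one-step version $dist(w,K)\le C\,dist(f(w),K)^{\beta}$ with $\beta<1$ produces exponents $\beta^n\to 0$, which is useless. The only one-step estimate that yields the required decay is an expansion $dist(f(w),K)\ge b\,dist(w,K)$ with $b>1$, which is exactly the hyperbolicity discussed in the paper's closing Remark and is unavailable under the bare hypothesis $\mathring K\neq\emptyset$ (e.g.\ $f(z)=z^2+1/4$); normal forms at repelling points plus a separate treatment of parabolic and critical orbits is speculative and not known to give a uniform estimate here. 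A smaller inaccuracy: composing disks with $f^n$ gives only an inequality between the two disk-suprema (disks through $f^n(z)$ need not lift, because of critical values); the direction you need does hold, but it is nothing beyond $G_K\circ f^n=d^nG_K$ itself.

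The paper's proof circumvents any a priori distortion estimate by a self-referential use of the obstruction set. Assuming $O_c\neq\emptyset$ (see (\ref{Oc})), it picks $x\in O_c$ with the extra property $G_K(x)\ge \frac{2ac^2}{\delta}\,dist(x,K)^{1/c}$, and derives the distance comparison \emph{from the contradiction hypothesis}: since $\overline{O_c}\subset f^{-1}(U_a)$, the first $N$ with $f^N(x)\in\mathcal{C}_a$ satisfies $f^N(x)\notin O_c$, and then $G_K(x)<c\,dist(x,K)^{1/c}$ together with $G_K\circ f^N=d^NG_K$ gives the ``slow growth'' inequality (\ref{slow}), $dist(f^N(x),K)<d^{Nc}dist(x,K)$. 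The loss $d^{Nc}$ is then absorbed by the exponent $1/c$, the fattened set $K_\epsilon$ with $\epsilon=(1/c^2)^c$ in (\ref{estimation}), and the comparison (\ref{delta}) of relative Green functions on the corona, producing $G_K(x)\le\frac{ac^2}{\delta}dist(x,K)^{1/c}$ and hence a contradiction with the choice of $x$. So the missing ingredient in your plan is not a distortion lemma to be supplied by finer local dynamics; it is replaced in the paper by this trick, which is the actual content of the proof and is absent from your proposal.
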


\begin{proof}
For $b\in \mathbb{R}^{+}\setminus \{0\}$, denote $U_b:=\{G_K<b\}\subset \mathbb{C}$. For $l\in \mathbb{R}^{+}\setminus \{0\}$, denote also $K_l:=\{z\in\mathbb{C}\text{ }|\text{ }dist(z,K)\leq l\}$. Then choose $a>0$ such that $\dis K_{2}\subset f^{-1}(U_a)$. Note that $f^{-1}(U_a)=U_{\frac{a}{d}}\subset\subset U_a$ by (\ref{invariance}). Denote by $\mathcal{C}_a$ the corona $U_a\setminus f^{-1}(U_a)$. There exists $\delta\in ]0,1[$ such that \begin{equation}\label{delta}\dis G_{K_{2, U_a}}+1\geq\delta (G_{K,U_a}+1)\text{ on }\mathcal{C}_a.\end{equation} Take $\dis c\in \left]0,\frac{\delta}{2a}\right[$, sufficiently small to have $\overline{O_c}\subset f^{-1}(U_a)$ and $\left(\frac{1}{c^2}\right)^c<2$.
We have $\forall \epsilon\in ]0,2]$, $\forall y\in U_a$,
\begin{align*}
\dis c\cdot dist(y,K)^{\frac{1}{c}}&\geq \inf_{\mu_y\in \Lambda_{y,U_a}}\int_{U_a} c\cdot dist(\cdot,K)^{\frac{1}{c}}d\mu_y\\
&\geq \inf_{\mu_y\in \Lambda_{y,U_a}}\int_{U_a\setminus K_{\epsilon}} c\cdot dist(\cdot,K)^{\frac{1}{c}}d\mu_y \\
&\geq \left(\min_{\overline{U_a}\setminus \mathring{K_{\epsilon}}}c\cdot dist(\cdot, K)^{\frac{1}{c}}\right)\inf_{\mu_y\in \Lambda_{y,U_a}}\int_{U_a\setminus K_{\epsilon}}d\mu_y \\
&= c\epsilon^{\frac{1}{c}}(G_{K_\epsilon,U_a}+1)(y).
\end{align*}
The first inequality comes from the fact that the Dirac measure $\delta_y$ belongs to $\Lambda_{y,U_a}$. The last inequality comes from Corollary \ref{klimek}, whose application is allowed by Lemma \ref{julia}. Then taking $\epsilon=\left(\frac{1}{c^2}\right)^c<2$, we obtain in $U_a$:  \begin{equation}\label{estimation}\dis c\cdot dist(\cdot,K)^{\frac{1}{c}}\geq \frac{1}{c}(G_{K_\epsilon,U_a}+1).\end{equation}

Now suppose, by contradiction, that $O_c\neq \emptyset$ (see Equation (\ref{Oc}) for definition). Thanks to the fact that  $c<\frac{\delta}{2a}$, we can choose $x\in O_c\setminus  \{G_{K}<\frac{2ac^2}{\delta}dist(\cdot,K)^{\frac{1}{c}}\}$. 

Note that $x\in O_c$ implies a "slow growth" of $(f^n(x))_n$, in the sense that $\forall n\geq 1$ such that $f^n(x)\notin O_c$, we have 
$$\dis\frac{1}{d^n}c\cdot dist(f^n(x),K)^{\frac{1}{c}}\leq G_K(x)< c\cdot dist(x,K)^{\frac{1}{c}},$$ and hence

\begin{equation}\label{slow}\dis dist(f^n(x),K)< d^{nc} dist(x,K).\end{equation}

\noindent Since $\dis U_a\setminus K = \bigcup_{i\geq 0}f^{-i}(\mathcal{C}_a)$ by (\ref{invariance}), there exists $N>0$ such that $f^N(x)\in \mathcal{C}_a$. Equations (\ref{slow}), (\ref{estimation}), (\ref{delta}), (\ref{relation}), then (\ref{invariance}), give 
\begin{align*}
\dis c\cdot  dist(x,K)^{\frac{1}{c}} &\geq \frac{c}{d^N} dist\left(f^N(x),K\right)^{\frac{1}{c}}\\
&\geq \frac{1}{cd^N}(G_{K_{\epsilon},U_a}+1) \circ f^N(x) &  \\
\\ &\geq \frac{\delta}{cd^N}(G_{K,U_a}+1)\circ f^N(x)\\
&= \frac{\delta}{ca}G_K(x).
\end{align*}
But this contradicts our assumption $x\notin \{G_{K}<\frac{2ac^2}{\delta}dist(x,K)^{\frac{1}{c}}\}$. We conclude that $O_c=\emptyset$. In other words, $K$ satisfies the {\L}S condition. 
\qed
\end{proof}

\begin{rmq}We note that if $f$ is assumed to be {\bf{hyperbolic}}, that is to say if $f$ do not have critical points in $J$, there exist a constant $b>0$ and a neighborhood of $K$ in which \begin{equation}\label{control}\dis dist\left(f(\cdot), K\right)\geq b\cdot dist(\cdot, K).\end{equation}Indeed, it is sufficient to etablish this inequality outside $K$. Let then $V$ be a neigborhood of $K$ in which $|f'|\geq a$ for some $a>0$, let $z\in V\setminus K$, and $z_0\in J$ such that $f(z_0)\in J$ achieves the distance $dist(f(z),J)$. Then Theorem 1 of \cite{D} shows the existence of a constant $k>0$ (depending only on the degree of $f$) and of a point $z_1\in J=\partial K$, such that $$\dis dist(f(z),K)=dist\left(f(z),f(z_0)\right)\geq a\cdot k \cdot dist(z,z_1)\geq a\cdot k \cdot dist(z,K).$$
\noindent In the particular case where $b\geq 1$ in (\ref{control}), we obtain a simpler proof of Theorem \ref{thm}, and a more quantitative estimation for $c$ in Equation (\ref{Oc}). Indeed, suppose $O_c\neq \emptyset$ with $O_c\subset \subset V$. We can choose $x\in O_c$ such that $f(x)\notin O_c$. Then, (\ref{slow}) together with (\ref{control}) give $$\dis c>\frac{\log{b}}{\log d}.$$
\end{rmq}

\begin{acknowledgements}
We thank Laurent Gendre and Ta\"ib Belghiti for their reading and suggestions, as well as Marta Kosek and the whole organization committee of the conference "On Constructive Theory of Functions" for their invitation to expose a preliminary version of this article in Poland.
\end{acknowledgements}

\end{document}